\def\BibTeX{{\rm B\kern-.05em{\sc i\kern-.025em b}\kern-.08em
		T\kern-.1667em\lower.7ex\hbox{E}\kern-.125emX}}
\newcommand{\reals}{\mathbb{R}}
\newcommand{\transpose}{^\textrm{T}}
\DeclareMathOperator*{\argmin}{arg\,min}
\newtheorem{theorem}{Theorem}
\newtheorem{assumption}{Assumption}
\newtheorem{lemma}{Lemma}
\newtheorem{definition}{Definition}
\newtheorem{corollary}{Corollary}
\newtheorem{problem}{Problem}
\newcommand{\dtime}[0]{\sigma}
\definecolor{lcolor}{rgb}{0,0,0.6}
\definecolor{edits}{rgb}{0,0,0}
\newcommand{\regularversion}[1]{\iffalse{}#1\fi}
 \newcommand{\extendedversion}[1]{{#1}}
\begin{document}

%\title{\LARGE\bf Safety-Critical Online Control of a Spacecraft with Impulsive Actuators and Dwell Time Constraints}

\title{Safety-Critical Control for Systems with Impulsive Actuators and Dwell Time Constraints}%
% \title{Safety-Critical Control of a Spacecraft with Impulsive Actuators and Dwell Time Constraints}%
%\title{Stabilization and Safety of a Spacecraft with Impulsive Actuators and Dwell Time Constraints}%
% \title{Impulsive Control Barrier Functions for Spacecraft Control under Dwell Time Constraints}%

\author{Joseph Breeden and Dimitra Panagou\thanks{The authors are with the Department of Aerospace Engineering and Department of Robotics, University of Michigan, Ann Arbor, MI, USA. Email: \texttt{\{jbreeden,dpanagou\}@umich.edu}. The authors thank the Fran\c{c}ois Xavier Bagnoud Foundation for supporting this research.}}%

\maketitle

\thispagestyle{empty}

\begin{abstract}
This paper presents extensions of control barrier function (CBF) and control Lyapunov function (CLF) theory to systems wherein all actuators cause impulsive changes to the state trajectory, and can only be used again after a minimum dwell time has elapsed. 
These rules define a hybrid system, wherein the controller must at each control cycle choose whether to remain on the current state flow or to jump to a new trajectory. 
%The time between jumps is aperiodic and lower bounded.
We first derive a sufficient condition to render a specified %safe 
set forward invariant using extensions of CBF theory. We then derive related conditions to ensure asymptotic stability in such systems, and apply both conditions online in an optimization-based control law with aperiodic impulses. We simulate both results on a spacecraft docking problem with multiple obstacles.%
\end{abstract}

\vspace{-5pt}
\begin{IEEEkeywords}
Constrained control, aerospace, hybrid systems%
\end{IEEEkeywords}

\vspace{-9pt}
\section{Introduction}

Control Barrier Functions (CBFs) \cite{CBFs_Tutorial} are a tool for designing control laws that render state trajectories always inside a specified set. Each CBF converts a set of allowable states, herein called the \emph{CBF set}, to a set of allowable control inputs at every state in that set \cite{compatibility_checking}. Any control input within this set will render the future state trajectory inside the CBF set. The controller thus has freedom to work towards other goals, such as convergence, as long as the control remains within the input set generated by the CBF. CBFs thus provide a computationally tractable solution to many nonlinear constrained control problems. While the original formulations of CBFs \cite{CBFs_FirstUse,BarrierCertificates,AmesJournal} considered continuous-time systems, subsequent authors have published numerous extensions to sampled systems \cite{LCSS,CBFs_Mechanical,self_triggered_cbf,self_event_triggered,event_triggered_journal,AIAA}, discrete-time systems \cite{Discrete_CBFs,Discrete_MPC_CBFs}, and hybrid systems \cite{hybrid_synergistic_cbfs,hybrid_robust_part_ii,hybrid_cbfs_hybrid_systems,hybrid_impulse_inclusion}, among others. In this paper, we develop set invariance rules for a specific class of hybrid systems: systems with impulsive actuators that are only permitted to be used after a minimum \emph{dwell time} has elapsed since their previous use. This models, for instance, a spacecraft with chemical thrusters.

Impulsive systems are a special class of hybrid systems, and there has been much work on stability of hybrid systems over the past two decades \cite{impulsive_iss,impulsive_optimal,impulsive_biped,impulsive_event_triggered_stability,impulsive_event_triggered_iss,impulsive_event_triggered_consensus}, and more recently work on set invariance \cite{hybrid_impulse_inclusion,BarrierCertificates,hybrid_robust_part_i} and CBFs \cite{hybrid_cbfs_hybrid_systems,hybrid_robust_part_ii,hybrid_synergistic_cbfs,hybrid_robot_eye_set_avoidance,adding_obstacles} for hybrid systems. A hybrid system is a combination of a set of time intervals where a system \emph{flows} according to a state differential equation called the \emph{flow map}, and a set of times where the state \emph{jumps} (changes instantaneously) according to an algebraic function called the \emph{jump map}. Control may be applied along the flows, at the jumps (also called impulses), or both. In this letter, we study systems where control occurs only via jumps, and jumps occur only when control is applied, as is formalized in Section~\ref{sec:prelims}. 
% are either overly conservative \cite{impulsive_iss} or no longer directly applicable.
% Ignoring {\color{edits}the minimum dwell time} constraint, 
{\color{edits}The work in} \cite{hybrid_impulse_inclusion,hybrid_robust_part_i} show that a hybrid system renders a set forward invariant if 1) the flow map always lies within the tangent cone of the set, and 2) the image of the set through the jump map is a subset of the set. 
The authors in \cite{BarrierCertificates,hybrid_robust_part_ii,hybrid_cbfs_hybrid_systems} then rewrite these conditions for {\color{edits}CBFs and CBF sets. However, these two conditions
have no way to incorporate a minimum dwell time constraint (equivalently, a minimum time between events \cite{impulsive_event_triggered_iss}).} %, or are overly conservative due to a reliance on bounding the flow dynamics by exponential rates of divergence, thus requiring excessive corrections during the impulses \cite{impulsive_iss}.}
% However, these conditions are impractical when the minimum dwell time constraint is added.
%Ignoring this constraint, \cite{BarrierCertificates,hybrid_impulse_inclusion,hybrid_robust_part_i} provide sufficient conditions analogous to \cite{Nagumo1942} for forward invariance of general sets along a hybrid system, and \cite{hybrid_robust_part_ii,hybrid_cbfs_hybrid_systems} encode these conditions as barrier functions.

% Recall that existence of a CBF implies existence of a control input that renders the CBF set forward invariant \cite{AmesJournal}. Thus, 
Recall that the problem of finding a CBF is equivalent to the problem of finding a controlled-invariant set \cite{ACC2023}. For hybrid systems, this equivalency follows from, e.g., \cite[Def.~3.6]{hybrid_robust_part_ii}
%{\color{edits},}
and 
\cite[Def.~5]{adding_obstacles}. In this letter, due to the minimum dwell time constraint, rather than applying control to render the state %always
inside such a controlled-invariant set, we must apply control to render the state into a set whose forward reachable set remains a subset of the CBF set at least until the dwell time has elapsed. This is an inherently different problem than that addressed by typical CBFs \cite{AmesJournal,CBFs_Tutorial} or by the hybrid CBFs in \cite{hybrid_impulse_inclusion,BarrierCertificates,hybrid_robust_part_i,hybrid_cbfs_hybrid_systems,hybrid_synergistic_cbfs,hybrid_robust_part_ii,hybrid_robot_eye_set_avoidance,adding_obstacles}, and has more in common with {\color{edits}margins for ensuring set invariance between samples under} sampled controllers such as \cite{sampled_data_2022,sampled_data_unknown,CBFs_Mechanical,LCSS,self_triggered_cbf,self_event_triggered}. 
% {\color{edits}These sampling margins, modified for impulsive control rather than zero-order-hold control, are the basis of our CBF approach. 
{\color{edits}This paper applies the same concept of sampling margins as in \cite{LCSS}, now modified for impulsive rather than zero-order-hold control, to guarantee set invariance under a minimum dwell time.
Additionally, in Section~\ref{sec:mpc}, we propose a variation of our method for reducing conservatism}. %, and we compare both methods in simulation in Section~\ref{sec:simulations}.} %for use with much longer} That said, we are also interested in much longer dwell times than were present in \cite{CBFs_Mechanical,LCSS,sampled_data_2022,sampled_data_unknown}, and {\color{edits}propose a strategy to reduce the conservatism of these papers}
% address this challenge 
% in Section~\ref{sec:mpc}.

Finally, the addition of the minimum dwell time constraint also complicates stability. The work in \cite{impulsive_iss} provides a {\color{edits}formula for a} maximum dwell time at which stability is still guaranteed{\color{edits}, and} similar {\color{edits}stability certificates for specified dwell times} are presented in \cite{impulsive_event_triggered_iss,impulsive_event_triggered_stability,impulsive_event_triggered_consensus}. {\color{edits}However, all of these results} are overly restrictive, because they all place weak assumptions {\color{edits}(e.g., exponentially bounded divergence)} on the flows in exchange for strong {\color{edits}requirements} {\color{edits}(e.g., rapid exponential contractivity)} on the jumps. This is sensible in general, since the jumps are controlled and the flows are uncontrolled, but the spacecraft community has long developed controllers with weaker assumptions on the jumps \cite{First_APF_Rendezvous,Rendezvous_APF_Constraints,rendezvous_APF_conference}, though not with the desired minimum dwell time. Thus, building up from the minimum dwell time constraint, this letter presents conditions to 
\vspace{-3pt}
\begin{enumerate}
	\item render a CBF set forward invariant subject to impulsive control {\color{edits}with a minimum dwell time constraint}, and
	\item render the origin asymptotically stable subject to the same impulsive control and dwell time rules.
\end{enumerate}
\vspace{-3pt}
{{\color{edits}This} paper is organized as follows. Section~\ref{sec:prelims} presents the system model. Section~\ref{sec:methods} presents the set invariance strategy, the asymptotic stability strategy, and some mathematical tools. Section~\ref{sec:simulations} presents {\color{edits}simulations of these methods on a satellite docking problem}. Section~\ref{sec:conclusions} presents concluding remarks.}

\section{Preliminaries} \label{sec:prelims}

% \subsection{Notation}

\noindent\textbf{Notations:}
Given a time domain $\mathcal{T} \subseteq\reals$, spatial domain $\mathcal{X}\subseteq\reals^n$, and function $\color{edits}\eta\color{black}:\mathcal{T}\times\mathcal{X}\rightarrow\reals$, denoted $\color{edits}\eta\color{black}(t,x)$, let $\partial_t \color{edits}\eta\color{black}$ denote the partial derivative with respect to $t$. Let $\nabla \color{edits}\eta\color{black}$ denote the gradient row vector with respect to $x$. Let $\dot{\color{edits}\eta} = \partial_t \color{edits}\eta\color{black} + \nabla \color{edits}\eta\color{black} \dot{x}$ denote the total derivative of ${\color{edits}\eta}$ in time. Let $\mathbb{N}$ denote the set of nonnegative integers. Let $\| \cdot \|$ denote the \regularversion{2-}\extendedversion{Euclidean }norm. A continuous function $\alpha:\reals_{\geq0}\rightarrow\reals_{\geq 0}$ \regularversion{\color{edits}is}\extendedversion{belongs to} class-$\mathcal{K}_\infty$, denoted $\alpha\in\mathcal{K}_\infty$ if 1) $\alpha(0) = 0$, 2) $\alpha$ is strictly increasing, and 3) $\lim_{\lambda\rightarrow\infty}\alpha(\lambda) = \infty$. Let $\mathcal{K}_r$ denote the set of continuous functions $\beta:\reals_{\geq 0}\rightarrow\reals_{\geq 0}$ satisfying 1) $\beta(0) = 0$,~and~2)~$\beta(\lambda) > 0$~for~all~$\lambda > 0$.
%there exists some $a\in\reals_{>0}$ such that $\alpha$ is strictly increasing on $[0,a)$, and 3) $\beta$ is nondecreasing on its entire domain. \todo{can this be replaced with $\beta(\lambda) > 0$ for all $\lambda > 0$?}

% \subsubsection{Model}

\vspace{3pt}
\noindent\textbf{Model:}
Spacecraft with chemical thrusters are frequently modeled as evolving according to an ordinary differential equation (ODE) with impulsive jumps. When activated, the thruster subsystem causes an instantaneous change, called an impulse, to the spacecraft velocity, and then the system flows according to the ODE until the next impulse is applied. Control {\color{edits}can} only {\color{edits}be} applied at the impulses\extendedversion{, as we assume that there are no other actuators capable of applying control during the flows}. We also assume the following two restrictions on impulses:
\renewcommand{\theenumi}{R-\arabic{enumi}}
\begin{enumerate}
	\item the controller is sampled with fixed period $\Delta t$ and an impulse can only be applied at the sample times; and \label{req:dt}
	\item the controller can only apply an impulse at least $\Delta T$ after the last impulse was applied, where $\Delta T > \Delta t$. \label{req:dT}
\end{enumerate}

Let $\mathcal{T}\subseteq\reals$ be a \regularversion{\color{edits}time domain}\extendedversion{set of considered times}, $\mathcal{X}\subseteq\reals^n$ be the state space, and $\mathcal{U}\subseteq\reals^m$ be the set of allowable controls. To encode \ref{req:dt}, let
\begin{subequations}\begin{equation}
	\mathcal{D}_0 \triangleq \{t\in\mathcal{T} \mid t = t_0 + k \Delta t, k \in \mathbb{N}\} \label{eq:def_D0}
\end{equation}
be the set of controller sample times originating from an initial time $t_0\in\mathcal{T}$.
To encode \ref{req:dT}, let the additional state $\dtime \in \reals_{\geq 0}$ encode the time since the last impulse was applied. A tuple $(t,\sigma)$ is an \emph{impulse opportunity} if $t\in\mathcal{D}_0$ and $\sigma \geq \Delta T$, or equivalently, if $(t,\sigma)$ lies in the set of impulse opportunities
\begin{equation}
	\mathcal{D} \triangleq \mathcal{D}_0 \times \{ \dtime \in \reals_{\geq 0} \mid \dtime \geq \Delta T\} \label{eq:def_D} \,. \\
\end{equation}
The control is thus a map $u:\mathcal{D}\times\mathcal{X}\rightarrow\mathcal{U}$ defined only at the set of impulse opportunities $\mathcal{D}$. The time $\Delta T$ is called the minimum \emph{dwell time} between impulses \cite{impulsive_iss}. {\color{edits}Assume that $\Delta T = q \Delta t$ for some $q\in\mathbb{N}$.}
% Denote the set of all other $(t,\sigma)$ combinations as
% \begin{equation} 
% 	\mathcal{C} \triangleq (\mathcal{T}\times\reals_{\geq 0}) \setminus \mathcal{D} \label{eq:def_C} \,,
% \end{equation}

We can thus model the spacecraft generally as
\begin{equation}
	\begin{cases}
		\;\begin{cases}
			\dot{x} = f(t,x) \\
			\dot{\sigma} = 1
		\end{cases} & (t,\dtime) \notin \mathcal{D} \\
		\;\begin{cases}
			x^+ = g(t,x,u) \\
			\sigma^+ = \sigma \textrm{ if } u=0 \\
			\sigma^+ = 0 \textrm{ if } u \neq 0
		\end{cases} & (t,\dtime) \in \mathcal{D}
	\end{cases} \label{eq:xdot}
\end{equation}%
\label{eq:model}%
\end{subequations}
The system \eqref{eq:xdot} defines a hybrid system with flow set $\mathcal{C}\triangleq (\mathcal{T}\times\reals_{\geq 0}) \setminus \mathcal{D}$, flow map $f:\mathcal{T}\times\mathcal{X}\rightarrow\reals^n$, jump set $\mathcal{D}$, and jump map $g:\mathcal{T}\times\mathcal{X}\times\mathcal{U}\rightarrow\mathcal{X}$. We note that \eqref{eq:xdot} has time-dependent jumps, and therefore is also a timed automaton \cite{time_automata}. In this paper, we assume that the maps $f$ and $g$ are {\color{edits}known and} single-valued (rather than being differential inclusions), {\color{edits}that $g(t,x,0) = x$ for all $t\in\mathcal{T},x\in\mathcal{X}$,} and that solutions to \eqref{eq:model} exist and are unique for all $t\in\mathcal{T}$.
Also assume that $\sigma(t_0) = \Delta T$ at the initial time $t_0$, so that the initial state tuple $(t_0,\sigma(t_0),x(t_0))$ is an impulse opportunity.

%\todo{should I define $z(t) = (t,\sigma(t),x(t))$ for compactness, or does that make the notation even harder to follow?}

% \noindent
% \makebox[\linewidth][s]{\indent Note that at every impulse opportunity $(t,\sigma)\in\mathcal{D}$, the}

% \noindent
Note that at every impulse opportunity $(t,\sigma)\in\mathcal{D}$, the
controller $u$ can choose whether or not to apply an impulse, so impulses will generally be aperiodic, and may lack an average dwell time as in \cite{impulsive_iss}. 
%The theorems in Section~\ref{sec:methods} thus establish conditions that the system should satisfy both if $u = 0$ and if $u\neq 0$.
For brevity in Section~\ref{sec:methods}, given a control law $u:\mathcal{D}\times\mathcal{X}\rightarrow\mathcal{U}$, denote the set of impulse opportunities where the control law chooses to not apply an impulse as
\begin{equation}
	\mathcal{Z}_\textrm{coast} \triangleq \{(t,\sigma,x)\in\mathcal{D}\times\mathcal{X} \mid u(t,\sigma,x) = 0\} \label{eq:Z_coast} \,.
\end{equation}

The central problem addressed in Section~\ref{sec:methods} is as follows.

\begin{problem} \label{problem}
	Given dynamics \eqref{eq:model} and a set $\mathcal{S}_\textrm{safe}(t)\subset\mathcal{X}$, derive conditions on the control $u$ that are sufficient to 1) guarantee $x(t)$ remains in $\mathcal{S}_\textrm{safe}(t), \forall t\in\mathcal{T}$, %{\color{edits}including between control impulses,} 
    and 2) render the origin asymptotically stable, where we assume $0\in\mathcal{S}_\textrm{safe}(t), \forall t\in\mathcal{T}$.
\end{problem}

% The conditions arising from Problem~\ref{problem} can then be enforced online using optimization-based control laws as is typical in the CBF literature \cite[Sec.~II-C]{CBFs_Tutorial}. Note how unlike \cite{CBFs_Tutorial}, in this paper, we will allow these optimizations to be nonlinear programs (e.g. see \eqref{eq:docking_control}). Such optimizations are more computationally expensive to solve than the quadratic programs in \cite{CBFs_Tutorial}, but we assume that this cost is acceptable because of the long dwell time $\Delta T$ between impulses.

The conditions arising from Problem~\ref{problem} can then be enforced online using optimization-based control laws as is typical in the CBF literature \cite[Sec.~II-C]{CBFs_Tutorial}. Unlike \cite{CBFs_Tutorial}, in this letter, we allow these optimizations to be nonlinear programs. Such programs are more computationally expensive than the quadratic programs in \cite{CBFs_Tutorial}, but we assume that this cost is acceptable because of the long dwell time $\Delta T$ between impulses.

%\subsection{Safety and Problem Formulation}
%
%Suppose we are given a nonempty set $\mathcal{S}_\textrm{safe}(t) \subset \mathcal{X}$, potentially time-varying, called the \emph{safe set}. The principal requirement of the control $u$ in \eqref{eq:model} is that given any $x(t_0)\in\mathcal{S}_\textrm{safe}(t_0)$, the control law must render the state $x(t)$ always inside $\mathcal{S}(t)$ for all $t\geq t_0,t\in\mathcal{T}$. A control law $u:\mathcal{D}\times\mathcal{X}\rightarrow\mathcal{U}$ that accomplishes this is called \emph{safe}. A control law that causes $\lim_{t\rightarrow\infty} x(t) = 0$ is called \emph{convergent}. The central question of this letter is how to develop safe control laws subject to the dynamics \eqref{eq:model}.
%\color{black}

%The central question of this letter is how to develop Given the system and timing constraints \eqref{eq:model}, the central question of this letter is how to develop convergent and provably safe impulsive control laws.

\section{Impulsive Timed Control Barrier Functions and Control Lyapunov Functions} \label{sec:methods}

%To employ the method of CBFs \cite{CBFs_Tutorial}, we further assume that there exists a finite number $n_\textrm{cbf}$ of continuous functions $h_i:\mathcal{T}\times\mathcal{X}\rightarrow\reals$ such that $\mathcal{S}_\textrm{safe}(t) \subseteq \cap_{i=1}^{n_\textrm{cbf}} \{ x\in\mathcal{X} \mid h_i(t,x) \leq 0 \}$ for all $t\in\mathcal{T}$. In Section~\ref{sec:cbfs}, we present conditions for each $h_i$ to be an ``impulsive timed CBF'' (ITCBF), and by consequence to be useful for safety. In Section~\ref{sec:control}, we briefly discuss control laws using ITCBFs. In Section~\ref{sec:stable}, we derive stability and convergence conditions specialized to the model \eqref{eq:model} and to common spacecraft dynamics. Finally, in Section~\ref{sec:bounds}, we discuss in more detail methods of developing the bounding functions required to define ITCBFs.

In this section, we first present some definitions and tools in Section~\ref{sec:bounds}, before using these tools to address invariance of a subset of $\mathcal{S}_\textrm{safe}(t)$ in Section~\ref{sec:cbfs}. We then address stability of the origin in two parts in Sections~\ref{sec:one-step}-\ref{sec:stability}, and provide examples and additional tools in Section~\ref{sec:examples}.

\vspace{-4pt}
\subsection{Flows and Bounding Functions} \label{sec:bounds}

In this letter, we will utilize predictions about the future state. Suppose that no jumps occur in some interval $[t,\tau]\subset\mathcal{T}$. Then define the flow operator $p:\mathcal{T}\times\mathcal{T}\times\mathcal{X}\rightarrow\mathcal{X}$ as
\begin{equation}
	p(\tau, t, x) = y(\tau) \textrm{ where } \dot{y}(s) = f(s,y(s)),\; y(t) = x\,.
	\label{eq:state_flow}
\end{equation}

Next, we are interested in approximations of the future state. Given a scalar function $h:\mathcal{T}\times\mathcal{X}\rightarrow\reals$, and an initial state $(t,x)$, denote by $\psi_h:\mathcal{T}\times\mathcal{T}\times\mathcal{X}\rightarrow\reals$ any function satisfying
\begin{equation}
	\psi_h(\tau, t, x) \geq h(s,p(s,t,x)), \;\forall s \in [t,\tau] \,. \label{eq:upper_bound}
\end{equation}
That is, $\psi_h$ is an upper bound on the evolution of the function $h$ for any interval $[t,\tau]$ during which there are no control impulses. Methods to find such a bounding function are described in \cite{LCSS,AIAA,CBFs_Mechanical,sampled_data_2022,sampled_data_unknown,self_triggered_cbf} and others, and thus are only briefly elaborated upon here in Section~\ref{sec:examples}. We note that \cite{LCSS,AIAA,CBFs_Mechanical,sampled_data_2022,sampled_data_unknown,self_triggered_cbf} all include a term that accounts for the effects of the control input $u\in\mathcal{U}$, whereas this term can be ignored here since $f$ in \eqref{eq:model} is independent of $u$.

\vspace{-4pt}
\subsection{Set Invariance} \label{sec:cbfs}

We first address the safety part of Problem~\ref{problem}. 
To apply the method of CBFs, we seek a function $h:\mathcal{T}\times\mathcal{X}\rightarrow\reals$ such that the set 
%Assume that $\mathcal{S}_\textrm{safe}(t)$ in Problem~\ref{problem} can be described by the intersection of the sublevel sets of finitely many functions $h_i:\mathcal{T}\times\mathcal{X}\rightarrow\reals$, so that we can apply the method of CBFs. Going forward, let $h:\mathcal{T}\times\mathcal{X}\rightarrow\reals$ be a single candidate CBF and denote the zero sublevel of $h$ set as
\vspace{-5pt}
\begin{equation}
	\mathcal{S}_h(t) \triangleq \{ x\in \mathcal{X} \mid h(t,x) \leq 0\} \label{eq:sh}
\end{equation}
satisfies $\mathcal{S}_h(t) \subseteq\mathcal{S}_\textrm{safe}(t), \forall t\in\mathcal{T}$. % and $\mathcal{S}_h$ is controlled invariant.
The definition of CBF \cite[Def. 5]{AmesJournal} can be generalized to 
the system \eqref{eq:model} as follows.
% systems of the form \eqref{eq:model} as follows.

\begin{definition}\label{def:itcbf}
	Let $\psi_h$ be as in \eqref{eq:upper_bound}. A continuous function $h:\mathcal{T}\times\mathcal{X}\rightarrow\reals$ is an \emph{Impulsive Timed Control Barrier Function (ITCBF)} for the system \eqref{eq:model} if 
	\begin{equation}
		\inf_{u\in\mathcal{U}} \psi_h(t+\Delta T, t, g(t,x,u)) \leq 0, \forall x\in\mathcal{S}_h(t),\forall t\in\mathcal{T} \,. \label{eq:cbf_definition}
	\end{equation}
\end{definition}

Note that 1) we relax \cite[Def. 5]{AmesJournal} to no longer require differentiability of $h$, though differentiability is helpful when applying tools from \cite{LCSS,AIAA,CBFs_Mechanical,sampled_data_2022,sampled_data_unknown,self_triggered_cbf}, and 2) condition \eqref{eq:cbf_definition} does not include a class-$\mathcal{K}$ function, as this is unnecessary in sampled controllers such as \eqref{eq:model}. 
\regularversion{The following theorem then provides sufficient conditions for forward invariance of $\mathcal{S}_h(t)$.}

\begin{theorem} \label{thm:safety}
	Given an ITCBF $h:\mathcal{T}\times\mathcal{X}\rightarrow\reals$ for the system \eqref{eq:model}, let $\mathcal{S}_h(t)$ be as in \eqref{eq:sh}, and $\psi_h$ as in \eqref{eq:upper_bound}. Let $u:\mathcal{D}\times\mathcal{X}\rightarrow\mathcal{U}$ be a control law, and let $\mathcal{Z}_\textrm{coast}$ be as in \eqref{eq:Z_coast}. If $u$ satisfies
	\begin{subequations}\begin{align}
			\hspace{-6pt} \psi_h(t+\Delta t, t, x) \leq 0, \;\;\; &\forall (t,\sigma,x)\in(\mathcal{D}\times\mathcal{S}_h)\cap\mathcal{Z}_\textrm{coast} \label{eq:cbf_cond1} , \hspace{-6pt}\\
			\hspace{-6pt} \psi_h(t+\Delta T, t,y) \leq 0, \;\;\; &\forall (t,\sigma,x)\in(\mathcal{D}\times\mathcal{S}_h)\setminus\mathcal{Z}_\textrm{coast} \label{eq:cbf_cond2} , \hspace{-6pt}
		\end{align}\label{eq:cbf_condition_relaxed}\end{subequations}
	where $y = g(t,x,u(t,\dtime,x))$, then $u$ renders time-varying set $\mathcal{S}_h(t)$ forward invariant for all $t\in\mathcal{T}$.
\end{theorem}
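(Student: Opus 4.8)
The plan is to prove forward invariance by induction over the discrete sequence of impulse opportunities, using the dwell-time constraint to pin down how those opportunities are spaced. Writing $t_k \triangleq t_0 + k\Delta t$ for the samples in $\mathcal{D}_0$, I would first extract the combinatorial structure of any solution. Since $\sigma(t_0) = \Delta T$, the initial time $t_0$ is an impulse opportunity, and at a generic opportunity $t_k$ the controller either coasts---preserving $\sigma \geq \Delta T$, so that $t_{k+1}$ is again an opportunity---or impulses, resetting $\sigma$ to $0$ so that, because $\Delta T = q\Delta t$, the samples $t_{k+1},\dots,t_{k+q-1}$ have $\sigma < \Delta T$ and the next opportunity is exactly $t_{k+q}$. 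In either case, on the interval between consecutive opportunities no point of $\mathcal{D}$ is met in the open interior, so by \eqref{eq:model} the state is a pure flow \eqref{eq:state_flow} emanating from the post-jump state. This lets me enumerate the opportunities as $t_0 = t_{k_0} < t_{k_1} < \cdots$ with the closed intervals $[t_{k_i}, t_{k_{i+1}}]$ tiling $\mathcal{T}\cap[t_0,\infty)$ at shared endpoints.

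The inductive invariant I would carry is that $x(t_{k_i}) \in \mathcal{S}_h(t_{k_i})$ and $h(s,x(s)) \leq 0$ for all $s\in[t_{k_i}, t_{k_{i+1}}]$, the base case being the standing hypothesis $x(t_0)\in\mathcal{S}_h(t_0)$. For the step I split on the decision at $t_{k_i}$. In the coasting branch $t_{k_{i+1}} = t_{k_i}+\Delta t$ and $x(s) = p(s,t_{k_i},x(t_{k_i}))$, so condition \eqref{eq:cbf_cond1} combined with the defining bound \eqref{eq:upper_bound} gives $h(s,x(s)) \leq \psi_h(t_{k_i}+\Delta t, t_{k_i}, x(t_{k_i})) \leq 0$ throughout. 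In the impulse branch $t_{k_{i+1}} = t_{k_i}+\Delta T$, the post-jump state is $y = g(t_{k_i},x(t_{k_i}),u(t_{k_i},\sigma(t_{k_i}),x(t_{k_i})))$ and $x(s) = p(s,t_{k_i},y)$, so \eqref{eq:cbf_cond2} with \eqref{eq:upper_bound} gives $h(s,x(s)) \leq \psi_h(t_{k_i}+\Delta T, t_{k_i}, y) \leq 0$ for every $s$ in the dwell window; evaluating this at $s = t_{k_i}$ also certifies that the post-jump value $y$ lies in $\mathcal{S}_h(t_{k_i})$. Either way $h\leq 0$ on $[t_{k_i},t_{k_{i+1}}]$ and $x(t_{k_{i+1}})\in\mathcal{S}_h(t_{k_{i+1}})$, closing the induction; since the intervals exhaust $\mathcal{T}\cap[t_0,\infty)$, the set $\mathcal{S}_h(t)$ is forward invariant.

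The step I expect to demand the most care is the bookkeeping that converts the single long-horizon inequality \eqref{eq:cbf_cond2} into coverage of every intermediate, non-opportunity sample: after an impulse the controller is forbidden from acting for the next $q-1$ samples, so no separate condition is imposed there, and it is precisely the identity $\Delta T = q\Delta t$ that aligns the $\Delta T$-horizon bound with the very sample at which the controller regains authority. A secondary subtlety is that forward invariance of a hybrid solution must be checked at post-jump values and not only pre-jump values at impulses; this is exactly what taking $s = t_{k_i}$ in \eqref{eq:upper_bound} supplies. The remaining facts---that the open inter-sample intervals contain no jumps and that \eqref{eq:state_flow} describes $x(s)$ on them---follow directly from the model \eqref{eq:model}.
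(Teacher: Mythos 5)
Your proposal is correct and follows essentially the same argument as the paper's proof: partition time into intervals between consecutive impulse opportunities, use \eqref{eq:cbf_cond1} with the bound \eqref{eq:upper_bound} on the $\Delta t$-long coasting intervals and \eqref{eq:cbf_cond2} on the $\Delta T$-long post-impulse intervals, and note that each interval's endpoint is again an impulse opportunity. Your version simply makes explicit some bookkeeping (the role of $\Delta T = q\Delta t$ and the post-jump membership check) that the paper leaves implicit.
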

\begin{proof}
	Given $(t_0,\sigma(t_0))\in\mathcal{D}$ and $x(t_0)\in\mathcal{S}_h(t_0)$, divide $\{t\in\mathcal{T} \mid t \geq t_0\}$ into a sequence of intervals $\mathcal{I}_k = [t_k, t_{k+1}]$, $k\in\mathbb{N}$, where $t_{k+1}  > t_k$. Then a sufficient condition for $u$ to render $\mathcal{S}_h(t)$ forward invariant for all future $t\in\mathcal{T}$ is for the following two properties to hold for every $k\in\mathbb{N}$: 1) $u$ renders $x(t)\in\mathcal{S}_h(t)$ for all $t$ in the interval $\mathcal{I}_k$, and 2) the endpoint $t_{k+1}$ of $\mathcal{I}_k$ is an impulse opportunity. If $u=0$, then condition \eqref{eq:cbf_cond1} implies that both properties hold for $t_{k+1} = t_k+\Delta t$. If $u\neq0$, then condition \eqref{eq:cbf_cond2} implies that both properties hold for $t_{k+1} = t_k + \Delta T$. %Since either \eqref{eq:cbf_cond1} or \eqref{eq:cbf_cond2} must hold at every $k\in\mathbb{N}$ 
 %cover all of $\mathcal{S}_h(t_k)$ at every impulse opportunity $(t_k,\sigma(t_k))\in\mathcal{D}$, it follows that the above properties hold for every $k\in\mathbb{N}$, so 
    Thus, $u$ renders $\mathcal{S}_h(t)$ forward invariant.
\end{proof}

Thus, we have two conditions analogous to \cite[Cor.~2]{AmesJournal} that render sets of the form \eqref{eq:sh} forward invariant subject to %impulsive dynamics with a minimum delay between control impulses as in \eqref{eq:model}. 
the impulsive dynamics \eqref{eq:model}.
The remaining challenge is to determine functions $h$ and $\psi_h$ satisfying \eqref{eq:cbf_definition} and \eqref{eq:upper_bound}, respectively. We first discuss conditions for asymptotic stability before providing examples of $h$ and $\psi_h$ in Section~\ref{sec:examples}. 

\subsection{One-Step MPC Impulsive Stability} \label{sec:one-step}

We now begin to address the stability part of Problem~\ref{problem}. There has been much work on stability of hybrid systems with continuous actuators \cite{impulsive_biped,hybrid_zero_dynamics,impulsive_optimal,self_event_triggered}, impulsive actuators \cite{impulsive_event_triggered_consensus,impulsive_event_triggered_stability,impulsive_event_triggered_iss}, or both \cite{impulsive_iss}. 
In summary, given a Lyapunov function $V:\mathcal{T}\times\mathcal{X}\rightarrow\reals_{\geq 0}$, the conditions \cite[Eq.~5]{impulsive_event_triggered_stability}, \cite[Eq.~8]{impulsive_event_triggered_iss}, and \cite[Eq.~4b]{impulsive_iss} state that if $V(t,g(t,x,u)) \leq c V(t,x)$ for $c\in(0,1)$, then for sufficiently frequent jumps, the origin of the system \eqref{eq:xdot} is exponentially stable. These conditions can be readily applied to stabilize \eqref{eq:model} using periodic impulses. However, when the dwell time $\Delta T$ is large, a more efficient strategy may be to examine the predicted value of the Lyapunov function after $\Delta T$ has elapsed rather than immediately after the impulse. To this end, consider the following lemma.

\begin{assumption}\label{as:lyap}
	Let $V:\mathcal{T}\times\mathcal{X}\rightarrow\reals_{\geq 0}$ be a continuously differentiable function satisfying
	\begin{equation}
		\alpha_1(\|x\|) \leq V(t,x) \leq \alpha_2(\|x\|) \label{eq:lyapunov_function}
	\end{equation}
	for all $x\in\mathcal{X}$ and all $t\in\mathcal{T}$ for two functions $\alpha_1,\alpha_2\in\mathcal{K}_\infty$.
\end{assumption}

\begin{lemma}\label{lemma:one-step_mpc}
	Let Assumption~\ref{as:lyap} hold. Assume that there exists $\alpha_3\in\mathcal{K}_\infty$ such that $f$ in \eqref{eq:model} satisfies $\|f(t,x)\| \leq \alpha_3(\|x\|)$ for all $t\in\mathcal{T}$ and $x\in\mathcal{X}$. Let $p$ be as in \eqref{eq:state_flow}. Let $u:\mathcal{D}\times\mathcal{X}\rightarrow\mathcal{U}$ be a control law, and denote $\mathcal{Z}_1\equiv\mathcal{Z}_\textrm{coast}$ as in \eqref{eq:Z_coast} and $\mathcal{Z}_2 = (\mathcal{D}\times\mathcal{X})\setminus \mathcal{Z}_1$. For the system \eqref{eq:model}, if $u$ satisfies
	\begin{subequations}
		\begin{align}
%			u(t,\sigma,x) = 0 \iff x \in \mathcal{X}_1&(t) \,, \label{eq:mpc_cond_u0} \\
			%
			V(t+\Delta t, p(t+\Delta t, t, x)) \leq V(t,x), \;\; & \forall (\cdot) \in \mathcal{Z}_1 , \hspace{-2pt} \label{eq:mpc_cond_flow_outside} \\
			V({t+\Delta T}, p({t \hspace{-0.6pt}+\hspace{-0.6pt} \Delta T}, t, y)) \leq V(t,x), \;\; &\forall (\cdot)\in\mathcal{Z}_2 , \hspace{-2pt} \label{eq:mpc_cond_jump_outside}
		\end{align}\label{eq:mpc_conditions}\end{subequations}%
	where $(\cdot)=(t,\sigma,x)$ and $y = g(t,x,u(t,\sigma,x))$, then $u$ renders the origin uniformly stable as in \cite[Def.~4.4]{Khalil}.
\end{lemma}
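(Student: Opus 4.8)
The plan is to combine the sample-to-sample monotonicity of $V$ forced by \eqref{eq:mpc_conditions} with the flow bound $\|f(t,x)\|\le\alpha_3(\|x\|)$ to control the state between sample times, and then to invoke the class-$\mathcal{K}$ characterization of uniform stability from \cite[Def.~4.4]{Khalil}. First I would reuse the interval decomposition from the proof of Theorem~\ref{thm:safety}: partition $\{t\in\mathcal{T}\mid t\ge t_0\}$ into intervals $\mathcal{I}_k=[t_k,t_{k+1}]$ whose endpoints are consecutive impulse opportunities, so that $t_{k+1}-t_k=\Delta t$ when the controller coasts at $t_k$ (i.e. $(\cdot)\in\mathcal{Z}_1$) and $t_{k+1}-t_k=\Delta T$ when it impulses (i.e. $(\cdot)\in\mathcal{Z}_2$); in either case $t_{k+1}-t_k\le\Delta T$.

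Letting $x(t_k)$ denote the pre-jump state at each endpoint, in the coast case $x(t_{k+1})=p(t_k+\Delta t,t_k,x(t_k))$, so \eqref{eq:mpc_cond_flow_outside} gives $V(t_{k+1},x(t_{k+1}))\le V(t_k,x(t_k))$; in the impulse case $x(t_{k+1})=p(t_k+\Delta T,t_k,y)$ with $y=g(t_k,x(t_k),u)$, so \eqref{eq:mpc_cond_jump_outside} gives the same inequality. Hence $V(t_k,x(t_k))\le V(t_0,x(t_0))$ for every $k\in\mathbb{N}$, and by \eqref{eq:lyapunov_function} the endpoint states obey $\|x(t_{k})\|\le\alpha_1^{-1}(\alpha_2(\|x(t_0)\|))$.

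The crux is to bound $\|x(s)\|$ at the intermediate times $s\in(t_k,t_{k+1})$, where \eqref{eq:mpc_conditions} says nothing directly; this is where I expect the main obstacle. Since the conditions bound $V$ only at endpoints, and in the impulse case the post-jump state $y$ on which the flow starts is itself uncontrolled, a forward Gr\"onwall estimate from the start of the interval is unavailable. I would instead bound the excursion \emph{backward} in time from the endpoint $x(t_{k+1})$, whose norm is already controlled. Writing $v(s)=\|x(s)\|$, the flow bound gives $|\dot v|\le\alpha_3(v)$ wherever $v$ is differentiable; comparing the time-reversed trajectory against $\dot r=\alpha_3(r)$ (using any locally Lipschitz class-$\mathcal{K}$ majorant of $\alpha_3$ if needed for uniqueness) yields a class-$\mathcal{K}$ function $\rho$, depending only on $\alpha_3$ and $\Delta T$, with $\|x(s)\|\le\rho(\|x(t_{k+1})\|)$ for all $s\in[t_k,t_{k+1}]$. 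This estimate is valid for $\|x(t_0)\|$ small enough that the comparison solution exists on $[0,\Delta T]$, which suffices for a local stability statement.

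Chaining the two bounds, for every $t\ge t_0$ lying in some $\mathcal{I}_k$ I would obtain
\begin{equation*}
\|x(t)\|\le\rho(\|x(t_{k+1})\|)\le\rho\big(\alpha_1^{-1}(\alpha_2(\|x(t_0)\|))\big)=:\gamma(\|x(t_0)\|),
\end{equation*}
where $\gamma\in\mathcal{K}$ is independent of $t_0$. This is precisely the class-$\mathcal{K}$ bound characterizing uniform stability: given $\epsilon>0$, taking $\delta=\gamma^{-1}(\epsilon)$ for $\epsilon$ in the range of $\gamma$ gives $\|x(t_0)\|<\delta\Rightarrow\|x(t)\|<\epsilon$ for all $t\ge t_0$, with $\delta$ independent of $t_0$, which completes the argument.
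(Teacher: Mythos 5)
Your proof is correct and follows the same route as the paper's: $V$ is nonincreasing across consecutive impulse opportunities by \eqref{eq:mpc_conditions}, the excursion between opportunities is controlled via $\|f(t,x)\|\le\alpha_3(\|x\|)$ over an interval of length at most $\Delta T$, and the two estimates are chained through $\alpha_1,\alpha_2$ into a class-$\mathcal{K}$ bound on $\|x(t)\|$. The only place you are more explicit than the paper is the between-sample bound: the paper asserts an $\alpha_4\in\mathcal{K}_\infty$ from boundedness of $V$ at both endpoints, whereas your backward-in-time comparison from $x(t_{k+1})$ correctly identifies why the estimate must be anchored at the post-flow endpoint (the jump $g(t,x,u)$ admits no a priori bound), at the cost of a merely local statement, which suffices for uniform stability.
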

\begin{proof}
	First, note that \eqref{eq:lyapunov_function} implies that every sublevel set of $V$ is compact and contains the origin. Also note that we assumed solutions to \eqref{eq:model} always exist, so $p$ in \eqref{eq:state_flow} and \eqref{eq:mpc_conditions} is well defined. 
  
    Let $(t_k,\sigma(t_k))\in\mathcal{D}$ be an impulse opportunity.
    For brevity, denote $z_k = (t_k, \sigma(t_k), x(t_k))$.
	First, suppose that $z_k\in\mathcal{Z}_2$, which implies that $u(z_k) \neq 0$. Then no other impulses can be applied for $t\in(t_k,t_k+\Delta T)$, so $x(t_k+\Delta T) = p(t_k+\Delta T, t_k, g(t_k, x(t_k), u(z_k)))$, and thus \eqref{eq:mpc_cond_jump_outside} ensures that $V(t_k+\Delta T, x(t_k+\Delta T))$ at the next impulse opportunity $(t_k+\Delta T,\sigma(t_k+\Delta T))\in\mathcal{D}$ is upper bounded by $V(t_k,x(t_k))$. Since $V$ is bounded at both $t_k$ and $t_k+\Delta T$ and $V$ satisfies \eqref{eq:lyapunov_function}, it follows that both $\|x(t_k)\|$ and $\|x(t_k + \Delta T)\|$ are also bounded. Since $\|f(t,x)\|$ is bounded by $\alpha_3(\|x\|)$ and $\Delta T$ is fixed and finite, the amount that $V$ can grow during the jump at $x(t_k^+)$ and during the flow $t\in(t_k,t_k+\Delta T)$ are bounded as $V(t,x(t)) \leq V(t_k,x(t_k)) + \alpha_4(\|x(t_k)\|)$ for some $\alpha_4 \in \mathcal{K}_\infty$. 
	 
	Next, suppose that $z_k\in\mathcal{Z}_1$. Then $u(z_k) = 0$, so the next impulse opportunity will occur at $t_k + \Delta t$. By \eqref{eq:mpc_cond_flow_outside}, $V(t_k+\Delta t,x(t_k+\Delta t)) \leq V(t_k, x(t_k))$. By the same argument as the prior case, $V$ is again bounded as $V(t,x(t)) \leq V(t_k,x(t_k)) + \alpha_4(\|x(t_k)\|)$ for some $\alpha_4 \in \mathcal{K}_\infty$ for all $t\in[t_k,t_k+\Delta t)$. % until the next impulse opportunity at $(t_k+\Delta t, \sigma(t_k+\Delta t)) \in \mathcal{D}$.
	
	Thus, $V$ is nonincreasing at the impulse opportunities $(t_k,\sigma(t_k))\in\mathcal{D}$, and the maximum value of $V(t,x(t)) - V(t_k,x(t_k))$ is uniformly bounded (i.e. uniform in $t$) by $\alpha_4(\|x(t_k)\|)$ for $t$ between the impulse opportunities. Thus, for initial state $x(t_0)\in\mathcal{X}$, $(t_0,\sigma(t_0))\in\mathcal{D}$, and any future time $t\in\mathcal{T}$, $t\geq t_0$, it holds that $\|x(t)\| \leq \alpha_1^{-1}(V(t,x(t))) \leq \alpha_1^{-1}(V(t_k,x(t_k)) + \alpha_4(\|x(t_k)\|)) \leq \alpha_1^{-1}(V(t_0,x(t_0)) + \alpha_4(\|x(t_0)\|))  \leq \alpha_1^{-1}(\alpha_2(\|x(t_0)\|) + \alpha_4(\|x(t_0)\|))$. This is equivalent to uniform stability of the origin.
\end{proof}

Lemma~\ref{lemma:one-step_mpc} differs from \cite{impulsive_event_triggered_stability,impulsive_iss,impulsive_event_triggered_iss} in three ways. First, \eqref{eq:mpc_conditions} provides conditions on the future state, which is explicitly computed using \eqref{eq:state_flow}, rather than the present state. Second, these predictions allow us to avoid explicitly checking for upper bounds on the growth of $V$ during flows, as is required in \cite{impulsive_event_triggered_stability,impulsive_event_triggered_iss}. Third, Lemma~\ref{lemma:one-step_mpc} allows for aperiodic impulses, as long as \eqref{eq:mpc_conditions} are checked at their respective frequencies.

We refer to \eqref{eq:mpc_conditions} as a ``one-step Model Predictive Control (MPC)'' strategy. That is, to evaluate \eqref{eq:mpc_conditions}, we input the control $u$ at a single (i.e. ``one-step'') time instance, make a prediction using \eqref{eq:state_flow}, and then check a condition on $V$, analogous to checking constraints in an MPC optimization.
%We can encode these conditions in optimization based controllers as in \cite[Eq.~32]{AmesJournal} (see also \todo{eq}).
Note that encoding \eqref{eq:mpc_conditions} into an optimization problem could be computationally expensive, since checking \eqref{eq:mpc_conditions} entails computing the solution to a differential equation during every iteration of the optimization. In Section~\ref{sec:simulations}, we assume that this cost is acceptable, %(which is reasonable because impulses are computed infrequently), 
or that we have an analytic form for the solution, as is the case for many spacecraft orbits. 
%The left hand sides of \eqref{eq:mpc_cond_jump_outside}-\eqref{eq:mpc_cond_flow_outside} could also be replaced with a bounding function $\psi_V$ as in \eqref{eq:upper_bound} to reduce computational requirements; we use this idea more extensively in the following subsection.
% Next, we propose stricter conditions for stability that instead make use of approximate predictions
% Next, we present a specialization of Lemma~\ref{lemma:one-step_mpc} that 
% potentially improves controller performance.
%uses approximation functions as in \eqref{eq:upper_bound} rather than explicitly computing $p$ in \eqref{eq:state_flow} and \eqref{eq:mpc_conditions}.

\vspace{-1pt}
\subsection{Impulsive Stability via Restriction to Stable Flows} \label{sec:stability}

% In this subsection, we present a specialization of Lemma~\ref{lemma:one-step_mpc}. % that can improve controller performance and reduce computational requirements by using approximation functions as in \eqref{eq:upper_bound}.
% In this subsection, we present a specialization of Lemma~\ref{lemma:one-step_mpc} that uses approximation functions as in \eqref{eq:upper_bound} rather than exactly computing \eqref{eq:state_flow}, and that potentially improves controller performance.
% focuses on the flows stability conditions that make use of approximate predictions of the state trajectory between impulse opportunities, rather than the exact predictions used in \eqref{eq:mpc_conditions}. 
% 
% Lemma~\ref{lemma:one-step_mpc} allows us to design controllers that ignore the effects of the flows in \eqref{eq:model}. Next, we will focus on designing controllers that explicitly make use of these flows, by placing stronger restrictions on the flow---restrictions that arise naturally in spacecraft dynamics---than in \cite{impulsive_event_triggered_stability,impulsive_iss,impulsive_event_triggered_iss}.
% 
Motivated by fuel efficiency, a strategy in aerospace systems (e.g. \cite{First_APF_Rendezvous}) is to allow a system to coast uncontrolled until a control impulse is necessary to continue stabilization. In this subsection, we implement this strategy subject to constraints \ref{req:dt}\regularversion{\color{edits},}\extendedversion{ and} \ref{req:dT} via a specialization of Lemma~\ref{lemma:one-step_mpc}. 
% This entails a greater focus on the flows between impulses rather than the value of $V$ after $t+\Delta T$. 
In technical terms, given a Lyapunov function $V$ as in \eqref{eq:lyapunov_function}, we seek to render the set 
\begin{equation}
	\mathcal{S}_v(t) \triangleq \{ x\in\mathcal{X} \mid v(t,x) \leq 0\} \label{eq:decreasing_set}
\end{equation}
forward invariant, where, for readability, we denote 
\begin{equation}
	v(t,x) \equiv \dot{V}(t,x) = \partial_t V(t,x) + \nabla V(t,x) f(t,x) \,. \label{eq:def_v}
\end{equation}
This is possible under dynamics \eqref{eq:model} if $v:\mathcal{T}\times\mathcal{X}\rightarrow\reals$ is also an ITCBF as in Definition~\ref{def:itcbf}. 
Let $\psi_v$ be an upper bound for $v$ analogous to $\psi_h$ in \eqref{eq:upper_bound}. 
% The idea of the following theorem is to provide sufficient conditions for stability of the origin using $\psi_v$ that serve as alternatives to the ``one-step MPC'' strategy in \eqref{eq:mpc_conditions}. Divide the state space into two sets: 1) $\mathcal{Z}_1\cup\mathcal{Z}_2$, where \eqref{eq:mpc_conditions} apply, and 2) $\mathcal{Z}_3\cup\mathcal{Z}_4$, where the new conditions \eqref{eq:conditions} apply.
In the following theorem, we provide new conditions to establish stability using such a coasting strategy. However, if $x(t_0) \notin \mathcal{S}_v(t_0)$, then these conditions will not initially apply, so we instead fall back on the ``one-step MPC'' strategy in \eqref{eq:mpc_conditions}. Divide the state space into two sets: 1) $\mathcal{Z}_1\cup\mathcal{Z}_2$, where the controller enforces \eqref{eq:mpc_conditions}, and 2) $\mathcal{Z}_3\cup\mathcal{Z}_4$, where the controller enforces the new conditions \eqref{eq:conditions}.

\begin{theorem} \label{thm:stable}
	Let Assumption~\ref{as:lyap} hold. Assume that there exists $\alpha_3\in\mathcal{K}_\infty$ such that $f$ in \eqref{eq:model} satisfies $\|f(t,x)\| \leq \alpha_3(\|x\|)$ for all $t\in\mathcal{T}$ and $x\in\mathcal{X}$. Let $v$ be as in \eqref{eq:def_v}, $\psi_v$ be as in \eqref{eq:upper_bound}, and $p$ be as in \eqref{eq:state_flow}.  
	Let $\mathcal{Z}_1$, $\mathcal{Z}_2$, $\mathcal{Z}_3$, and $\mathcal{Z}_4$ be four disjoint sets such that $\mathcal{Z}_1\cup\mathcal{Z}_3 = \mathcal{Z}_\textrm{coast}$ in \eqref{eq:Z_coast}, and $\mathcal{Z}_2\cup\mathcal{Z}_4 = (\mathcal{D}\times\mathcal{X})\setminus\mathcal{Z}_\textrm{coast}$. Then for the system \eqref{eq:model}, any control law $u:\mathcal{D}\times\mathcal{X}\rightarrow\mathcal{U}$ satisfying \eqref{eq:mpc_conditions} and all of the following
	\begin{subequations}
		\begin{align}
%			u(t,\sigma,x) = 0 \iff x\in&\mathcal{X}_1(t) \cup \mathcal{X}_3(t) \,, \label{eq:cond_u0} \\
			%
			\hspace{-6pt}\psi_v(t+\Delta t, t, x) \leq 0, \;\;\;\;\;\;\;\;\;\;\;\;\;\;\;\;\;\;\;\;\;\;\;\;\;\, & \forall (t,\sigma,x)\in\mathcal{Z}_3 , \label{eq:cond_flow0} \\ 
			\hspace{-6pt}\psi_v(t+\Delta T, t, g(t, x, u(t,\sigma, x))) \leq 0, \;\; &\forall (t,\sigma,x)\in\mathcal{Z}_4 ,  \label{eq:cond_flow} \\ 
			\hspace{-6pt}V(t,g(t, x, u(t,\sigma, x))) \leq V(t,x), \;\;\;\;\;\, & \forall (t,\sigma,x)\in\mathcal{Z}_4 , \label{eq:cond_jump}
		\end{align}\label{eq:conditions}\end{subequations}%
	%where $y=g(t, x, u(t,\sigma, x))$, 
 will render the origin uniformly stable as in \cite[Def.~4.4]{Khalil}.
\end{theorem}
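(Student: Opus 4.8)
The plan is to reduce Theorem~\ref{thm:stable} to Lemma~\ref{lemma:one-step_mpc}. The four zones are designed so that $\mathcal{Z}_1\cup\mathcal{Z}_3=\mathcal{Z}_\textrm{coast}$ and $\mathcal{Z}_2\cup\mathcal{Z}_4=(\mathcal{D}\times\mathcal{X})\setminus\mathcal{Z}_\textrm{coast}$, which are precisely the two sets over which the hypotheses of Lemma~\ref{lemma:one-step_mpc} are posed. The theorem already assumes \eqref{eq:mpc_conditions} on $\mathcal{Z}_1$ and $\mathcal{Z}_2$, so it suffices to show that the stable-flow conditions \eqref{eq:conditions} imposed on $\mathcal{Z}_3$ and $\mathcal{Z}_4$ imply the one-step MPC inequalities \eqref{eq:mpc_cond_flow_outside} and \eqref{eq:mpc_cond_jump_outside} at those same impulse opportunities. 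Once this is established, $u$ satisfies \eqref{eq:mpc_cond_flow_outside} on all of $\mathcal{Z}_\textrm{coast}$ and \eqref{eq:mpc_cond_jump_outside} on its complement, and uniform stability follows immediately by invoking Lemma~\ref{lemma:one-step_mpc} (whose remaining hypotheses, namely Assumption~\ref{as:lyap} and the bound $\|f(t,x)\|\le\alpha_3(\|x\|)$, are restated in the theorem).

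The key step is an integration argument that converts a nonpositive upper bound on $\dot V$ along a flow into a net decrease of $V$ across that flow. Recalling $v=\dot V$ from \eqref{eq:def_v} and the defining property \eqref{eq:upper_bound} of $\psi_v$, the inequality $\psi_v(\tau,t,\xi)\le 0$ forces $\dot V(s,p(s,t,\xi))=v(s,p(s,t,\xi))\le\psi_v(\tau,t,\xi)\le 0$ for every $s\in[t,\tau]$. Since solutions to \eqref{eq:model} exist and $V$ is continuously differentiable, $p$ in \eqref{eq:state_flow} is well defined and the fundamental theorem of calculus gives $V(\tau,p(\tau,t,\xi))-V(t,\xi)=\int_t^\tau \dot V(s,p(s,t,\xi))\,ds\le 0$.

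I then apply this in each new zone. On $\mathcal{Z}_3\subseteq\mathcal{Z}_\textrm{coast}$ the controller coasts, so the next impulse opportunity is $t+\Delta t$; taking $\tau=t+\Delta t$ and $\xi=x$, condition \eqref{eq:cond_flow0} yields $V(t+\Delta t,p(t+\Delta t,t,x))\le V(t,x)$, which is exactly \eqref{eq:mpc_cond_flow_outside}. On $\mathcal{Z}_4$ the controller impulses, so the next opportunity is $t+\Delta T$; writing $y=g(t,x,u(t,\sigma,x))$, condition \eqref{eq:cond_jump} gives $V(t,y)\le V(t,x)$ (no increase across the jump), and taking $\tau=t+\Delta T$, $\xi=y$, condition \eqref{eq:cond_flow} yields $V(t+\Delta T,p(t+\Delta T,t,y))\le V(t,y)\le V(t,x)$, which is exactly \eqref{eq:mpc_cond_jump_outside}. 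This completes the reduction.

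The only delicate point is the integration step, and it is delicate only in that it uses the full ``for all $s\in[t,\tau]$'' quantifier in \eqref{eq:upper_bound}: it is essential that $\psi_v$ bound $v$ along the \emph{entire} flow segment, not merely at the endpoint, so that $\dot V$ is pointwise nonpositive and the integral inherits the sign. Existence and absolute continuity of the flow $p$ needed for the fundamental theorem of calculus follow from the standing assumptions on \eqref{eq:model}. Everything else is bookkeeping: the uniform-in-$t$ growth bound between impulse opportunities is supplied verbatim by Lemma~\ref{lemma:one-step_mpc}, and is in fact only tighter on $\mathcal{Z}_3,\mathcal{Z}_4$, since the arguments above show $V$ is genuinely nonincreasing along those flows rather than merely at the opportunities.
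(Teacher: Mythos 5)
Your proof is correct and follows essentially the same route as the paper's: a case analysis on $\mathcal{Z}_3$ and $\mathcal{Z}_4$ showing that $\psi_v\le 0$ forces $\dot V=v\le 0$ along the flow segment (hence $V$ nonincreasing across each inter-opportunity interval), combined with \eqref{eq:cond_jump} at the jump, and then falling back on Lemma~\ref{lemma:one-step_mpc} for the uniform-stability conclusion. Your packaging of this as an explicit reduction --- showing \eqref{eq:conditions} implies \eqref{eq:mpc_cond_flow_outside}--\eqref{eq:mpc_cond_jump_outside} on $\mathcal{Z}_3\cup\mathcal{Z}_4$ --- is a slightly crisper statement of what the paper does implicitly when it says the result follows ``by the same argument as Lemma~\ref{lemma:one-step_mpc}.''
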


% The idea of this proof is to show that any of the conditions \eqref{eq:mpc_cond_flow_outside}, \eqref{eq:mpc_cond_jump_outside}, \eqref{eq:cond_flow0}, or \eqref{eq:cond_flow}-\eqref{eq:cond_jump} are sufficient to guarantee stability, so we can design a controller where different conditions apply

\begin{proof}
	Let $(t_k,\sigma(t_k))\in\mathcal{D}$ be an impulse opportunity, and let $(t_{k+1},\sigma(t_{k+1}))\in\mathcal{D}$ be the next impulse opportunity.
    For brevity, denote $z_k = (t_k, \sigma(t_k), x(t_k))$. 
    %Let $t_k$ be a sample time such that $(t_k,\sigma(t_k))\in\mathcal{D}$ is an impulse opportunity, and let $t_{k+1}$ be the next time of an impulse opportunity. 
    First, Lemma~\ref{lemma:one-step_mpc} implies that if $z_k\in\mathcal{Z}_1\cup\mathcal{Z}_2$, then $V(t_{k+1},x(t_{k+1})) \leq V(t_k, x(t_k))$. 
    
	Next, if $z_k\in\mathcal{Z}_3\cup\mathcal{Z}_4$, conditions \eqref{eq:cond_flow0}-\eqref{eq:cond_jump} similarly imply that $V(t_{k+1},x(t_{k+1})) \leq V(t_k, x(t_k))$. Specifically, if $z_k \in \mathcal{Z}_3 \subseteq\mathcal{Z}_\textrm{coast}$, then no impulse is applied, and \eqref{eq:cond_flow0} implies that $V(t,x(t))$ is nonincreasing along the flow $f$ for all $t\in[t_k,t_k+\Delta t)$ until the next impulse opportunity at $t_{k+1} = t_k + \Delta t$. Next, if $z_k\in\mathcal{Z}_4$, then a nonzero impulse is applied, \eqref{eq:cond_jump} implies that $V$ is nonincreasing during the impulse, and \eqref{eq:cond_flow} implies that $V(t,x(t))$ is nonincreasing along the flow $f$ for all $t\in(t_k,t_k+\Delta T)$ until the next impulse opportunity at $t_{k+1}=t_k+\Delta T$. Thus, $V(t_{k+1},x(t_{k+1})) \leq V(t_k,x(t_k))$ for all $(t_k,\sigma(t_k))\in\mathcal{D}$, so the origin is uniformly stable by the same argument as Lemma~\ref{lemma:one-step_mpc}.
\end{proof}

Compared to \cite{impulsive_event_triggered_stability,impulsive_iss,impulsive_event_triggered_iss}, Theorem~\ref{thm:stable} imposes stricter conditions on the flows \eqref{eq:cond_flow0}-\eqref{eq:cond_flow} in order to allow relaxed conditions on the jumps \eqref{eq:cond_jump} and the jump times. In \cite{impulsive_event_triggered_stability,impulsive_event_triggered_iss}, it is assumed that the flows are destabilizing and jumps are exponentially stabilizing, whereas Theorem~\ref{thm:stable} says that if we can restrict the flow \eqref{eq:cond_flow0}-\eqref{eq:cond_flow} to the set in \eqref{eq:decreasing_set} where $\dot{V}\leq 0$, as is often possible in practice, then the jump \eqref{eq:cond_jump} only needs to be stabilizing, not exponentially stabilizing. This coasting strategy can reduce control usage compared to {\color{edits}the exponentially stabilizing impulses in} \cite{impulsive_event_triggered_stability,impulsive_event_triggered_iss}, and is distinct from {\color{edits}the coasting strategy in} \cite{First_APF_Rendezvous} because of the explicit inclusion of a minimum time between impulses. Note that \eqref{eq:cond_flow0}-\eqref{eq:cond_flow} are identical to \eqref{eq:cbf_cond1}-\eqref{eq:cbf_cond2}, so a controller as in Theorem~\ref{thm:stable} will further render $\mathcal{S}_v$ in \eqref{eq:decreasing_set} forward invariant if $x(t_0)\in\mathcal{S}_v(t_0)$ and $\mathcal{Z}_3\cup\mathcal{Z}_4 = \mathcal{D}\times\mathcal{S}_v$. Finally, we present a result on asymptotic stability that we will use in Section~\ref{sec:simulations}.

\begin{corollary} \label{cor:asymptotic}
	Let the conditions of Theorem~\ref{thm:stable} hold. If there exists $\beta_1,\beta_2\in\mathcal{K}_r$ and $\Delta T_\textrm{max}\in\reals_{>0}$ such that 1)  \eqref{eq:scond_flow_outside}-\eqref{eq:scond_jump_outside} hold and 2) either 2a) \eqref{eq:scond_flow0}-\eqref{eq:scond_flow} hold or 2b) \eqref{eq:scond_jump}-\eqref{eq:scond_time} hold%
	% In the first case, you need to assume that both \eqref{eq:scond_jump_outside} and \eqref{eq:scond_flow_outside} hold, because if only \eqref{eq:scond_flow_outside} holds, then there is nothing to force the system to flow instead of jumping. Thus, both must hold, which means we do not need to place a minimum frequency on jumps
	\begin{subequations}
		\begin{align}
			&\hspace{-8pt}V(t+\Delta t, p(t+\Delta t, t, x)) - w \leq -\beta_2(w), \;\;\, \forall (\cdot)\in\mathcal{Z}_1 , \label{eq:scond_flow_outside} \hspace{-7pt} \\
			&\hspace{-8pt}V(t+\Delta T, p(t + \Delta T, t, y)) - w \leq -\beta_2(w), \, \forall (\cdot) \in\mathcal{Z}_2 , \label{eq:scond_jump_outside} \hspace{-7pt} \\
			&\hspace{-6pt}\psi_v(t+\Delta t, t, x) \leq -\beta_1(w), \;\;\;\;\;\;\;\;\;\;\;\;\;\;\;\;\;\;\;\;\;\;\;\;  \forall (\cdot) \in\mathcal{Z}_3 , \hspace{-7pt} \label{eq:scond_flow0}\\ 
			&\hspace{-6pt}\psi_v(t+\Delta T, t,y) \leq -\beta_1(w), \;\;\;\;\;\;\;\;\;\;\;\;\;\;\;\;\;\;\;\;\;\;\; \forall (\cdot)\in\mathcal{Z}_4 , \hspace{-7pt} \label{eq:scond_flow} \\ 
			&\hspace{-6pt}V(t,y) - w \leq -\beta_2(w), \;\;\;\;\;\;\;\;\;\;\;\;\;\;\;\;\;\;\;\;\;\;\;\;\;\;\;\;\;\; \forall (\cdot) \in\mathcal{Z}_4 , \hspace{-7pt} \label{eq:scond_jump}  \\
			&\hspace{-6pt}\,\sigma \geq \Delta T_\textrm{max} \implies u(t,\sigma,x) \neq 0, \;\;\;\;\;\;\;\;\;\;\; \forall (\cdot)\in\mathcal{D}\times\mathcal{X},  \label{eq:scond_time}
		\end{align}
	\end{subequations}
	where $(\cdot)= (t,\sigma,x)$, $y= g(t, x, u(t,\sigma, x))$, $w = V(t,x)$,
	then the origin is uniformly asymptotically stable \cite[Def.~4.4]{Khalil}.
\end{corollary}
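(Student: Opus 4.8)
The plan is to build directly on Theorem~\ref{thm:stable}. Since the hypotheses of Theorem~\ref{thm:stable} are assumed, the origin is already uniformly stable and $V$ is nonincreasing across consecutive impulse opportunities, so it remains only to upgrade this to uniform \emph{attractivity}. I would index the impulse opportunities as $(t_k,\sigma(t_k))\in\mathcal{D}$ with values $w_k \triangleq V(t_k,x(t_k))$, which Theorem~\ref{thm:stable} already shows form a nonincreasing, nonnegative sequence, hence convergent to some $w_\infty \geq 0$. The goal reduces to proving $w_\infty = 0$ at a rate uniform in the initial data, and then transferring this conclusion to $\|x(t)\|\to 0$.

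The core is a case analysis of the decrease $w_k - w_{k+1}$ at each opportunity $z_k=(t_k,\sigma(t_k),x(t_k))$. For $z_k\in\mathcal{Z}_1$ and $z_k\in\mathcal{Z}_2$, conditions \eqref{eq:scond_flow_outside} and \eqref{eq:scond_jump_outside} give directly $w_{k+1}\leq w_k - \beta_2(w_k)$. For $z_k\in\mathcal{Z}_3\cup\mathcal{Z}_4$ I would split on whether 2a or 2b holds. Under 2a, I use that $\psi_v$ upper-bounds $v=\dot V$ along the flow (definition \eqref{eq:upper_bound}): \eqref{eq:scond_flow0} forces $\dot V \leq -\beta_1(w_k)$ on the coasting interval of length $\Delta t$, and \eqref{eq:scond_flow} forces $\dot V\leq -\beta_1(w_k)$ on the post-impulse flow of length $\Delta T$; integrating, and using \eqref{eq:cond_jump} for the jump in $\mathcal{Z}_4$, yields $w_{k+1}\leq w_k-\Delta t\,\beta_1(w_k)$ in either case. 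Thus under 2a every step satisfies $w_{k+1}\leq w_k-\rho(w_k)$ with $\rho(w)\triangleq\min\{\beta_2(w),\Delta t\,\beta_1(w)\}\in\mathcal{K}_r$. Under 2b the jump condition \eqref{eq:scond_jump} gives $w_{k+1}\leq w_k-\beta_2(w_k)$ whenever an impulse is applied (i.e.\ $z_k\in\mathcal{Z}_2\cup\mathcal{Z}_4$), while coasting in $\mathcal{Z}_3$ only guarantees $w_{k+1}\leq w_k$ from Theorem~\ref{thm:stable}. Here \eqref{eq:scond_time} is essential: since $\sigma$ resets to $0$ at each impulse and grows at unit rate, it reaches $\Delta T_\textrm{max}$ after a uniformly bounded number $N$ of coasting samples, forcing an impulse; hence every window of at most $N+1$ opportunities contains a strict decrease of at least $\beta_2$ evaluated at the $V$-value at that impulse.

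With these decrements in hand, I would prove uniform attractivity by a compactness argument that sidesteps the non-monotonicity of $\mathcal{K}_r$ functions. Fixing a threshold $\underline V>0$, as long as $w_k\geq \underline V$ the relevant decrement is bounded below by the positive minimum of $\rho$ (case 2a) or $\beta_2$ (case 2b) over the compact interval $[\underline V,w_0]$; therefore $w_k$ falls below $\underline V$ within a number of steps (2a) or windows (2b) that is uniform in $t_0$, and since each step spans at most $\Delta T$ and each window at most $(N+1)\Delta T$, within a uniformly bounded time. Letting $\underline V\downarrow 0$ gives $w_k\to 0$ uniformly, and in particular $w_\infty=0$. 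Finally I would lift this to the continuous trajectory using the same excursion bound as in Lemma~\ref{lemma:one-step_mpc}, namely $V(t,x(t))\leq w_k+\alpha_4(\|x(t_k)\|)$ between opportunities; since $\|x(t_k)\|\leq\alpha_1^{-1}(w_k)$ by \eqref{eq:lyapunov_function}, the excursion vanishes as $w_k\to 0$, so $\sup_{t}V(t,x(t))\to 0$ uniformly and hence $\|x(t)\|\to 0$, yielding uniform asymptotic stability in the sense of \cite[Def.~4.4]{Khalil}.

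I expect the main obstacle to be the 2b branch: establishing a \emph{uniform} per-window decrease when coasting only guarantees monotonicity of $V$ and the strict decrease occurs at an a priori unknown impulse time within the window, which is precisely what forces the compactness argument and the careful use of \eqref{eq:scond_time} to cap the coasting horizon. A secondary technical point is that $\mathcal{K}_r$ functions need not be monotone, so the uniform decrements must be extracted as minima over compact $V$-intervals rather than by direct composition.
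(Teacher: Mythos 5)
Your proposal is correct and follows essentially the same route as the paper's proof: the same case analysis over $\mathcal{Z}_1$--$\mathcal{Z}_4$ yielding a strict decrement of $V$ per impulse opportunity (or, under 2b, per window of boundedly many opportunities enforced via \eqref{eq:scond_time}), followed by convergence of the sampled sequence $V_k\to 0$ and the lift to the continuous trajectory through the excursion bound of Lemma~\ref{lemma:one-step_mpc}. The only notable difference is that your compact-interval minimum argument makes the uniformity of the decrement explicit, whereas the paper writes $-\beta(V_k)\leq-\beta(V_{k+1})$ (tacitly presuming a monotonicity that $\mathcal{K}_r$ does not require), so your version is, if anything, slightly more careful on that point.
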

\extendedversion{\begin{proof}
    The main idea of this proof is to show that there exists a convergent sequence $\{V_k\}_{k=1}^N$, where we denote $V_k = V(t_k,x(t_k))$, each $(t_k,\sigma(t_k))\in\mathcal{D}$ is an impulse opportunity, and $\Delta t \leq t_{k+1} - t_k \leq \Delta T_\textrm{max}$. We will do this in three parts. For brevity, denote $z_k = (t_k,\sigma(t_k),x(t_k))$.

    First, conditions \eqref{eq:scond_flow_outside}-\eqref{eq:scond_jump_outside} strengthen \eqref{eq:mpc_cond_flow_outside}-\eqref{eq:mpc_cond_jump_outside} so that the ``one-step MPC'' strategy is now asymptotically stabilizing for all $z_k\in\mathcal{Z}_1\cup\mathcal{Z}_2$. Specifically, if $z_k\in\mathcal{Z}_1$, then let $t_{k+1} = t_k + \Delta t$, so that \eqref{eq:scond_flow_outside} is equivalent to $V_{k+1} - V_k \leq -\beta_2(V_k) \leq -\beta_2(V_{k+1})$. Similarly, if $z_k \in \mathcal{Z}_2$, then \eqref{eq:scond_jump_outside} implies the same result for $t_{k+1} = t_k + \Delta T$.

    Second, if 2a holds, then impulses are stabilizing as in Theorem~\ref{thm:stable}, and flows are now asymptotically stabilizing for all $z_k\in\mathcal{Z}_3\cup\mathcal{Z}_4$. If $z_k\in\mathcal{Z}_3$, then let $t_{k+1} = t_k + \Delta t$. Then \eqref{eq:scond_flow0} implies that $v(t,x(t))\equiv\dot{V}(t,x(t)) \leq -\beta_1(V(t,x(t))) \leq -\beta_1(V_{k+1})$ for all $t\in(t_k,t_{k+1})$. It follows that $V_{k+1} - V_k \leq - \beta_1(V_{k+1}) \Delta t$. If 2a holds and instead $z_k\in\mathcal{Z}_4$, then let $t_{k+1} = t_k + \Delta T$. Then \eqref{eq:cond_jump} implies that $V$ is nonincreasing during the impulse, so \eqref{eq:scond_flow} similarly implies that $V_{k+1} - V_k \leq - \beta_1(V_{k+1}) \Delta T$.

    Third, if 2b holds, then flows are stabilizing as in Theorem~\ref{thm:stable}, and impulses are now asymptotically stabilizing for all $z_k\in\mathcal{Z}_3\cup\mathcal{Z}_4$. Condition \eqref{eq:scond_time} implies that impulses occur at least as frequently as $\Delta T_\textrm{max}$, so let $t_{k+1}$ be the time of the last impulse opportunity in $[t_k,t_k+\Delta T_\textrm{max}]$. Next, let $\{\tau_j\}_{j=1}^M$ be the sequence of impulse opportunity times starting at $t_k$ and ending at $t_{k+1}$. Then Theorem~\ref{thm:stable} implies that $V(\tau_{j+1},x(\tau_{j+1}))\leq V(\tau_j,x(\tau_j))$ for all $j\in\{1,\cdots,M\}$. Moreover, \eqref{eq:scond_jump}-\eqref{eq:scond_time} imply that there exists at least one $\tau_j \in \{\tau_j\}_{j=1}^{M-1}$ such that $V(\tau_{j+1},x(\tau_{j+1})) - V(\tau_j,x(\tau_j)) \leq -\beta_2(V(\tau_j,x(\tau_j)))$. It follows that $V_{k+1} - V_k \leq -\beta_2(V(\tau_j,x(\tau_j)))\leq -\beta_2(V_{k+1})$.

    Recall that $\mathcal{Z}_1\cup\mathcal{Z}_2\cup\mathcal{Z}_3\cup\mathcal{Z}_4 = \mathcal{D}\times\mathcal{X}$, so the combination of \eqref{eq:scond_flow_outside}-\eqref{eq:scond_jump_outside} and either \eqref{eq:scond_flow0}-\eqref{eq:scond_flow} or \eqref{eq:scond_jump}-\eqref{eq:scond_time} covers all possible states. In every case, we showed that $V_{k+1} - V_k \leq -\beta(V_{k+1})$ for some $\beta\in\mathcal{K}_r$. Equivalently, $V_{k+1} + \beta(V_{k+1}) \leq V_k$. Since each $V_k \geq 0$, this condition describes a convergent sequence $\{V_k\}_{k=1}^N$. If $\mathcal{T}$ is unbounded, then $N=\infty$, and $\lim_{k\rightarrow\infty}V_k = 0$. Note that this convergence is uniform in time, because $\beta$ is only a function of $V$ (i.e. $\beta$ is not a function of $t$ and $V$). Since \eqref{eq:model} is uniformly stable, $\|x\|$ satisfies $\|x\| \leq \alpha_1^{-1}(V(t,x(t)))$, the sequence $t_k$ satisfies $t_{k+1}-t_k \leq \Delta T_\textrm{max}$, and $V_k$ is uniformly convergent, it follows that the origin of \eqref{eq:model} is uniformly asymptotically stable.
\end{proof}}%
\regularversion{%\vspace{-8pt}%
\begin{proof}
    % The proof (see \cite[Cor.~1]{extended}) is a typical exercise in Lyapunov analysis and is omitted here for brevity.
    The proof is a standard exercise in Lyapunov analysis and is omitted here for brevity. See \cite[Cor.~1]{extended}.
    % The proof is based on standard Lyapunov analysis and is omitted here for brevity. See \cite[Cor.~1]{extended}.
\end{proof}}

That is, if the Lyapunov function $V$ is nonincreasing as in Theorem~\ref{thm:stable}, and either the flows \eqref{eq:scond_flow0}-\eqref{eq:scond_flow} or the jumps \eqref{eq:scond_jump}-\eqref{eq:scond_time} cause $V$ to strictly decrease, then the origin is asymptotically stable. 
Again, we provide alternative ``one-step MPC'' conditions \eqref{eq:scond_flow_outside}-\eqref{eq:scond_jump_outside} in case \eqref{eq:scond_flow0}-\eqref{eq:scond_time} cannot be satisfied because $x(t)\notin\mathcal{S}_v(t)$. 
If we further assume that $\beta_1$ and $\beta_2$ are linear functions, then the conditions in Corollary~\ref{cor:asymptotic} become special cases of \cite[Thm.~1]{impulsive_iss}. % wherein both flows and jumps are stabilizing

\subsection{Examples of Bounding Functions} \label{sec:examples}

In this subsection, we discuss in more detail how to develop $\psi_h$ and $\psi_v$ to use in the preceding theorems.
Suppose second order dynamics such that $x = [r\transpose,\,\dot{r}\transpose]\transpose\in\reals^n$ for flow dynamics $\ddot{r} = f_r(x)$. First, an obstacle avoidance constraint can be written using the following form of CBF $h$ \cite{Sreenath2016}:
\begin{subequations}\begin{align}
    &\kappa(t,x) = \rho - \| r - r_0(t) \| \label{eq:def_kappa} \\
    &h(t,x) = \kappa(t,x) + \gamma \dot{\kappa}(t,x) \label{eq:cbf_obstacle} \\
    &\psi_h(t+\delta, t, x) = \max\big\{ h(t,x), \nonumber \\ &\;\;\;\;\;\;\;\;\;\;\;\kappa(t,x) + (\gamma + \delta) \dot{\kappa}(t,x) + \left( {\textstyle\frac{1}{2}}\delta^2 + \gamma \delta \right) \ddot{\kappa}_\textrm{max} \big\} \label{eq:psi_h} %\\
    %
%    &\ddot{\kappa}_\textrm{max} = \max_{\substack{s\in[t,t+\delta] \\ y \in \{ y \mid \exists \tau \in [t,t+\delta] : V(\tau,y) \leq V(t,x) \} } } \ddot{\kappa}(s,y)
\end{align}\end{subequations}
where {\color{edits}$\rho\in\reals_{>0}$ is the obstacle radius}, $\gamma\in\reals_{>0}$ {\color{edits}is a constant}, $r_0:\mathcal{T}\rightarrow\reals^{n/2}$ is the center of {\color{edits}the} obstacle, and $\ddot{\kappa}_\textrm{max}\in\reals_{\geq0}$ is an upper bound on the possible values of $\ddot{\kappa}$ between $t$ and $t+\delta$. 
%This bound can be local (i.e. specific to $t$ and $x(t)$) or global (constant for all $t$ and $x(t)$) \cite{LCSS}. 
We use formula \eqref{eq:psi_h} for the bound $\psi_h$ because $\kappa$ in \eqref{eq:def_kappa} is not thrice differentiable, so we cannot make use of any higher order derivatives.
Next, the rate of change of a \extendedversion{quadratic} Lyapunov function $V(t,x) \extendedversion{= x\transpose P x}$ can be upper bounded as
\regularversion{

$\;$ \vspace*{-16pt}}
% \begin{subequations}%
\begin{align}
    % &\hspace{-4pt}V(t,x) = x\transpose P x \label{eq:V_quad} \\
    %
    &\hspace{-7pt}\psi_v(t+\delta, t, x) \hspace{-1pt}=\hspace{-1pt} \dot{V}(t,x) \hspace{-0.5pt}+\hspace{-0.5pt} \max \{0, \ddot{V}(t,x) \} \delta \hspace{-0.5pt}+\hspace{-0.5pt} {\textstyle\frac{1}{2}} \dddot{V}_\textrm{max} \delta^2 \hspace{-6pt} \label{eq:psi_v} % \\
    %
%    &\dddot{V}_\textrm{max}(t,x) = \max_{\substack{s\in[t,t+\delta] \\ y \in \{ y \mid \exists \tau \in [t,t+\delta] : V(\tau,y) \leq V(t,x)} } \dddot{V}(t,x)
\end{align}%
% \end{subequations}%
where $\dddot{V}_\textrm{max}\in\reals_{\geq0}$ is an upper bound on the \extendedversion{possible }values of $\dddot{V}$. 
\extendedversion{We stop the approximation $\psi_v$ at the third derivative of $V$, because we note that $\dddot{V}$ is a function of only derivatives and higher powers of $f_r$, so higher order approximations do not substantially decrease conservatism.}
% To justify stopping the approximation $\psi_v$ at the third derivative of $V$, note that for the dynamics $\ddot{r} = f(x)$, the formula for $\ddot{V}$ is a function of $\ddot{r}$ and $\dddot{r}$, which are both terms in which $f(x)$ will appear explicitly. By contrast, $\dddot{V}$ is only a function of derivatives and higher powers of $f$, and is therefore often much smaller than $\ddot{V}$. Thus, extending the approximation encoded in $\psi_v$ beyond the third derivative of $V$ does not yield much benefit. 
%
% To see this, consider the derivatives of $V$ for the double integrator system
% \begin{align}
% 	&V(x) \hspace{-1pt}=\hspace{-1pt} x\transpose \hspace{-1pt}\begin{bmatrix} p_{11} & p_{12} \\ p_{12} & p_{22} \end{bmatrix}\hspace{-1pt} x, \;\; \dot{x} \hspace{-1pt}=\hspace{-1pt}\hspace{-1pt} \begin{bmatrix} 0 & 1 \\ 0 & 0 \end{bmatrix} x, \\
% 	%  
% 	&\dot{V}(x) \hspace{-1pt}=\hspace{-1pt} x\transpose \hspace{-1pt}\begin{bmatrix} 0 & p_{11} \\ p_{11} & 2 p_{12} \end{bmatrix}\hspace{-1pt} x, \;
% 	\ddot{V}(x) \hspace{-1pt}=\hspace{-1pt} x\transpose \hspace{-1pt}\begin{bmatrix} 0 & 0 \\ 0 & 2 p_{11} \end{bmatrix}\hspace{-1pt} x, \;
% 	\dddot{V}(x) \hspace{-1pt}=\hspace{-1pt} 0 \,. \nonumber
% \end{align}

\subsubsection{Decreasing Conservatism} \label{sec:mpc}

Note that the upper bounds derived in \cite{LCSS,AIAA,CBFs_Mechanical,sampled_data_2022,sampled_data_unknown,self_triggered_cbf} and implemented above were intended for relatively short horizon times $\tau - t$. For very large horizon times, these upper bounds can become overly conservative. We can optionally decrease this conservatism by breaking the interval $\tau - t$ into $n_\psi\in\mathbb{N}$ smaller intervals. To this end, let $\delta = (\tau-t)/n_\psi$ and $\tau_j = t + j \delta$, and for a scalar function $h:\mathcal{T}\times\mathcal{X}\rightarrow\reals$, replace $\psi_h$ as above with $\psi_h^*:\mathcal{T}\times\mathcal{X}\rightarrow\reals^{n_\psi}$ with elements defined as
\begin{equation}
	[\psi_h^*(\tau,t,x)]_j = \psi_h(\tau_j,\tau_{j-1},p(\tau_{j-1},t,x)) \label{eq:mpc}
\end{equation}
for $j = 1, \cdots, n_\psi$. That is, $\psi_h^*$ makes $n_\psi$ exact state predictions using $p$ in \eqref{eq:state_flow}, which could be more expensive to compute, and bounds the evolution between these predictions using the original $\psi_h$ function. This division is analogous to MPC with a control horizon of 1, a prediction horizon of $n_\psi$, {\color{edits}and a discretization margin encoded in $\psi_h$}. In the above work, all statements of the form $\psi_a(\cdot) \leq 0$, where $a$ is $h$ or $v$, can be equivalently replaced by $\psi_a^*(\cdot) \leq 0$ elementwise. We will demonstrate the utility of this strategy \extendedversion{in simulation }in Section~\ref{sec:simulations}.

\section{Simulations} \label{sec:simulations}

We validate the above methods by simulating an impulsive system representative of spacecraft docking in low Earth orbit. Let 
%\color{edits}$r,\dot{r} \in \reals^2$, $x = [r\transpose,\, \dot{r}\transpose]\transpose \in\color{black} 
$\mathcal{X} = \reals^4$\color{black}, $\mathcal{U} = \reals^2$, let $\color{edits}\mu\color{black}\in\reals_{>0}$ be constant, and let
%\begin{subequations}
% \begin{equation}
%     f(\cdot) \hspace{-1pt}=\hspace{-1pt} \begin{bmatrix} 0 & 0 & 1 & 0 \\ 0 & 0 & 0 & 1 \\ 3 n^2 & 0 & 0 & 2 n \\ 0 & 0 & -2 n & 0 \end{bmatrix}\hspace{-2pt} x ,\; g(\cdot) \hspace{-1pt}=\hspace{-1pt} x + \begin{bmatrix} 0 & 0 \\ 0 & 0 \\ 1 & 0 \\ 0 & 1 \end{bmatrix}\hspace{-2pt} u \,.\label{eq:docking}
% \end{equation}%
% \begin{equation}
%     \color{edits}\ddot{r} = -\frac{\mu r}{\|r\|^3} \,. \label{eq:docking}
% \end{equation}
\begin{equation}
    \color{edits}f(\cdot) = \begin{bmatrix} x_3 \\ x_4 \\ -\mu x_1 / (x_1^2 + x_2^2)^{3/2} \\ -\mu x_2/(x_1^2 + x_2^2)^{3/2} \end{bmatrix}, \; g(\cdot) = \begin{bmatrix} x_1 \\ x_2 \\ x_3 + u_1 \\ x_4 + u_2 \end{bmatrix} \,. \label{eq:docking}
\end{equation}
%\end{subequations}%
Let there be four CBFs $h_i$ of the form \eqref{eq:cbf_obstacle} for various obstacles $r_i(t)\color{edits}\in\reals^2$, \extendedversion{\color{edits}representing other objects in orbit,} 
with $\psi_{h_i}$ as in \eqref{eq:psi_h}. Let there be an additional constraint $\kappa_5(t,x) = \color{edits}(r - r_5)\transpose (\dot{r}_5/\|\dot{r}_5\|) \color{black}\leq 0$ with associated CBF $h_5$ also as in \eqref{eq:cbf_obstacle}. {\color{edits}That is, $\kappa_5$ encodes that the controlled satellite $r$ must always lie behind an uncontrolled target satellite $r_5(t)\in\reals^2$. Let $x_t(t) = [r_5(t)\transpose,\,\dot{r}_5(t)\transpose]\transpose$.} We choose a Lyapunov function $V(t,x) = \color{edits}(x-x_t(t))\transpose P (x-x_t(t))$ and approximation $\psi_v^*$ as in \eqref{eq:psi_v} and \eqref{eq:mpc}. Let $\gamma_1,\gamma_2\in\reals_{\geq0}$ and $J\in\reals_{>0}$ be constants. The chosen control law is
\begin{subequations}\begin{equation}
    u = \begin{cases} 
        0 &  \psi_v(\cdot) \leq \gamma_1 V(t,x)  \textrm{ and } \psi_{h_i}(\cdot) \leq 0,\,i\in\mathcal{I} \\ 
        u^* & \textrm{else} 
    \end{cases}
\end{equation}
where $(\cdot) = (t+\Delta t, t, x)$, $\mathcal{I}=\{1,2,3,4,5\}$, and $u^*$ is
\begin{align}
    u^* &= \argmin_{u\in\reals^2} u\transpose u + J d^2 \\
    &\;\;\; \textrm{s.t. } \psi_v^*(t+\Delta T, t, g(t, x, u)) \leq \gamma_1 V(t,x) + d \\ 
        &\;\;\;\;\;\;\;\;\; V(t,g(t,x,u)) \leq \gamma_2 V(t,x) + d \\
        &\;\;\;\;\;\;\;\;\; \psi_{h_i}(t+\Delta T, t, g(t,x,u)) \leq 0, \, i\in\mathcal{I} \label{eq:control_h} \,.
\end{align}\label{eq:docking_control}\end{subequations}
We assume that the optimization \eqref{eq:docking_control} is always feasible, though we note that this is difficult to guarantee when there are multiple CBFs \cite{Sharing_CBFs,compatibility_checking,ACC2023}. We simulated \eqref{eq:docking_control} using {\color{edits}various choices of} $\Delta T$, and then repeated these simulations with $\psi_h$ in \eqref{eq:control_h} replaced with $\psi_h^*$ as in \eqref{eq:mpc} with $n_{\psi_h} = 10$. {\color{edits}The resultant trajectories, converted to Hill's frame for visualization, are shown in Fig.~\ref{fig:trajectory}, and full results are shown in the video below\footnote{\url{https://youtu.be/_o-FAGbvfgg}}. A comparison to a trajectory pre-planner is also shown in Fig.~\ref{fig:trajectory}, and details on select trajectories are shown in Figs.~\ref{fig:control}-\ref{fig:lyap}}. All simulation code and parameters can {\color{edits}also} be found below\footnote{\url{https://github.com/jbreeden-um/phd-code/tree/main/2023/LCSS\%20Impulsive\%20Control}}.

%In Figs.~{}, we show simulations using the constraint indices $\mathcal{I}_4$ and $\mathcal{I}_5$, using $\Delta T$ of both 15 seconds and 30 seconds, both with $\psi_h$ as in \eqref{eq:obstacle} and using $\psi_h^*$ as in \eqref{eq:mpc} with $n_h = 10$. We also compare to an offline precomputed solution. Full simulation code and parameters can be found below\footnote{\todo{link}}.

All of the simulations {\color{edits}in Fig.~\ref{fig:trajectory}} remained safe{\color{edits}, and eight of the nine trajectories converged to the target. The trajectory using $\psi_h$ with $\Delta T = 60$ was so conservative that it immediately turned away from the target, whereas trajectories using $\psi_h^*$ still converge with much larger $\Delta T$, though the rate of convergence is slow for $\Delta T \geq 420$. This is} because $\psi_h^*$ implements \eqref{eq:psi_h} with a smaller, less conservative, $\delta$ than $\psi_h$ alone. That said, this decreased conservatism came at an average computational cost per control cycle{\color{edits}, for $\Delta T = 45$, of 0.22~s using $\psi_h^*$ and 0.022~s using $\psi_h$\regularversion{, both run on a 3.5~GHz CPU}.}\extendedversion{. These computation times are for a 3.5~GHz CPU, and would likely be much larger onboard a spacecraft processor.}
{\color{edits}The total fuel consumption varied from 188~m/s ($\Delta T = 30$ with $\psi_h$) to 18.2~m/s ($\Delta T = 300$ with $\psi_h^*$). For comparison, the pre-planned trajectory consumed between 12.2~m/s and 13.9~m/s depending on the choice of $\Delta T$. This improvement is expected since \eqref{eq:docking_control} only considers $T$ seconds of the trajectory at a time, whereas a pre-planner can optimize over longer sequences.}

\begin{figure}
	\centering
	\includegraphics[width=0.9\columnwidth]{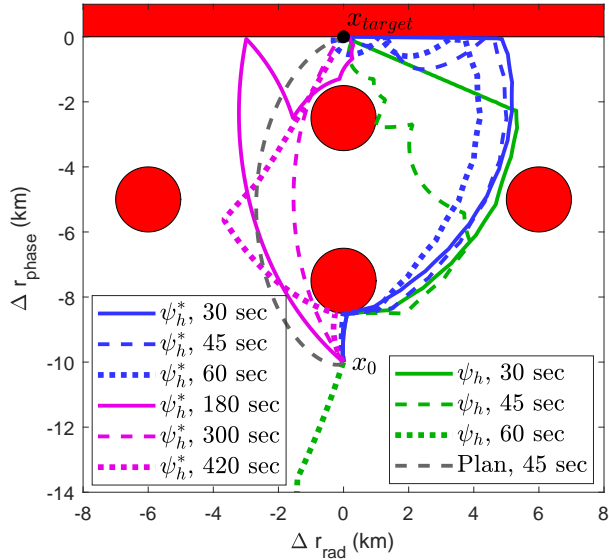}
	\caption{Trajectories of \eqref{eq:model} and \eqref{eq:docking} subject to the control \eqref{eq:docking_control}}
	\label{fig:trajectory}
    % \vspace{-6pt}
\end{figure}

\begin{figure}
	\centering
	\includegraphics[width=\columnwidth,trim={0in, 0.34in, 0in, 0in},clip]{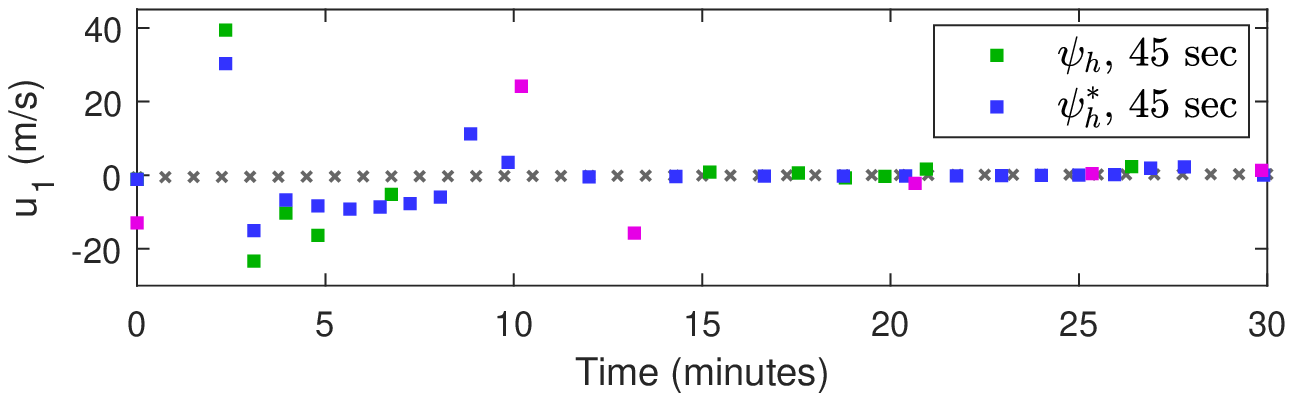}
	\includegraphics[width=\columnwidth,trim={0in, 0in, 0in, 0in}]{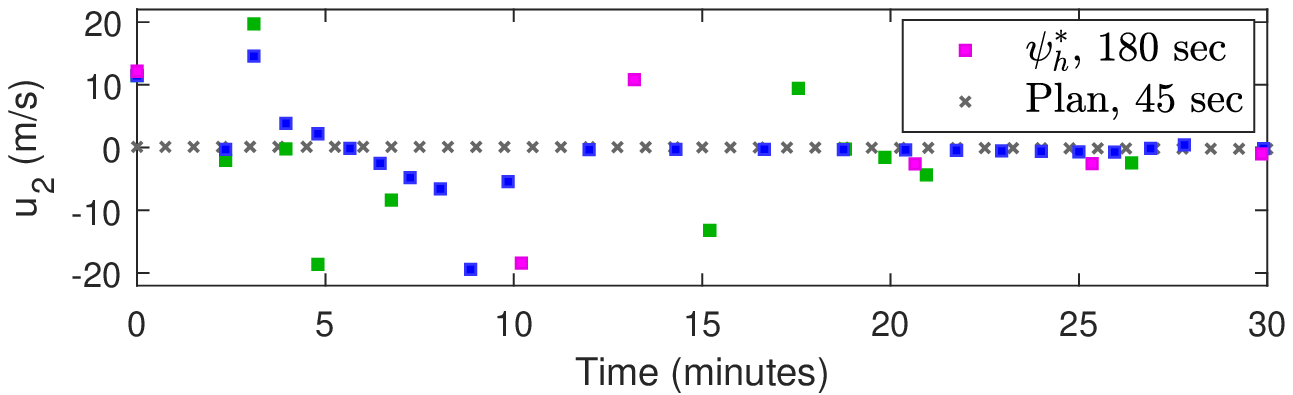}
	\caption{Control inputs along selected trajectories in Fig.~\ref{fig:trajectory}}
	\label{fig:control}
\end{figure}
% \vspace{6pt}
\begin{figure}
	\centering
	\includegraphics[width=\columnwidth]{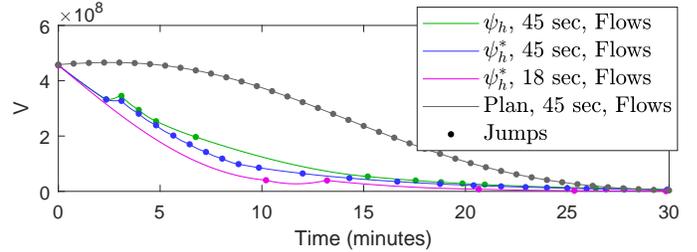}
	\caption{Lyapunov function along selected trajectories in Fig.~\ref{fig:trajectory}}
	\label{fig:lyap}
\end{figure}

\section{Conclusions} \label{sec:conclusions}

We have developed a methodology for extending the provable set invariance guarantees provided by CBFs to systems with impulsive actuators subject to a minimum dwell time constraint, and for ensuring asymptotic stability in the same systems. We \extendedversion{then }encoded the resulting conditions in an optimization-based control law, which was successful in a simulated spacecraft docking. The conditions presented are generally nonlinear in the control input, thus leading to controllers that are \regularversion{\color{edits}nonlinear programs}\extendedversion{solutions to nonlinear optimizations}. We showed how one can reduce the conservatism of these controllers, in exchange for greater computational cost, by dividing the safety prediction horizon into multiple intervals using an MPC-like strategy.
%We then established stability criteria that can be used as part of the same nonlinear optimization already being used to ensure safety. The use of ``one-step'' MPC constraints reduced conservatism in both the safety and stability conditions, as was demonstrated in the spacecraft docking simulation. 
Future research directions might consider extensions {\color{edits}to systems with disturbances,} %of these minimum dwell time results, 
methods to further decrease conservatism, or the use of ITCBFs with optimal trajectory planning.

\bibliographystyle{ieeetran}
\bibliography{sources}

\end{document}